\title{Brualdi–Hoffman–Tur\'{a}n problem of the gem}
\date{}
\author{Fan Chen$^{1,\, 2}$, Xiying Yuan$^{1,\, 2}$ \thanks{Corresponding author.\\ \indent This work is supported by the National Natural Science Foundation of China (Nos. 11871040,
12271337, 12371347) \\ \indent Email address: xiyingyuan@shu.edu.cn (Xiying Yuan) chenfan@shu.edu.cn (Fan Chen)}}
\affil{{\footnotesize\emph{1. Department of Mathematics, Shanghai University, Shanghai 200444, P.R. China}}\\
{\footnotesize\emph{2. Newtouch Center for Mathematics of Shanghai University, Shanghai 200444, P.R. China}}}
\begin{document}
\newtheorem{theorem}{Theorem}[section]
\newtheorem{assumption}[theorem]{Assumptio}
\newtheorem{corollary}[theorem]{Corollary}
\newtheorem{proposition}[theorem]{Proposition}
\newtheorem{lemma}[theorem]{Lemma}
\newtheorem{definition}[theorem]{Definition}
\newtheorem{remark}[theorem]{Remark}
\newtheorem{problem}[theorem]{Problem}
\newtheorem{claim}{Claim}
\newtheorem{conjecture}[theorem]{Conjecture}
\newtheorem{fact}{Fact}

\maketitle
\noindent\rule[0pt]{17.5cm}{0.09em}

\noindent{\bf Abstract}

A graph is said to be $F$-free if it does not contain $F$ as a subgraph. Brualdi–Hoffman– Tur\'{a}n type problem seeks to determine the maximum spectral radius of an $F$-free graph with given size. The gem consists of a path on $4$ vertices, along with an additional vertex that is adjacent to every vertex of the path. Concerning Brualdi–Hoffman–Tur\'{a}n type problem of the gem, when the size is odd, Zhang and Wang [Discrete Math. 347 (2024) 114171] and Yu, Li and Peng [arXiv:2404. 03423] solved it. In this paper, we completely solve the Brualdi–Hoffman–Tur\'{a}n type problem of the gem.

\noindent

\noindent{\bf Keywords:} Gem; Spectral radius; Brualdi–Hoffman–Tur\'{a}n type problem.

\noindent{\bf AMS subject classifications:} 05C50; 05C35

\noindent\rule[0pt]{17.5cm}{0.05em}

\section{Introduction}
Let $G$ be a simple graph with vertex set $V(G)$ and edge set $E(G)$. We denote the number of vertices and edges of  $G$ by $n=|V(G)|$ and $m=|E(G)|$, respectively. We use the notation  $P_n$, $C_n$, and $K_n$ to represent the path, cycle, and complete graph on $n$ vertices, respectively. For two disjoint graphs $G$ and $H$, let $G\cup H$ be the graph with vertex set $V(G)\cup V(H)$ and edge set $E(G)\cup E(H)$. The join of two disjoint graphs $G$ and $H$, denoted by $G\vee H$, is obtained from $G\cup H$ by adding all possible edges between $G$ and $H$. For a graph $G$, let $A(G)$ denote its adjacency matrix and $\rho(G)$ denote the spectral radius of $G$. By Perron-Frobenius Theorem, $\rho(G)$ is the largest eigenvalue of $A(G)$.

In \cite{N1}, Nikiforov introduced the classic spectral Tur\'{a}n problem, which aims to determine the maximum spectral radius of an $F$-free graph of given order. This problem is also known as the Brualdi-Solheid-Tur\'{a}n problem. The study of the Brualdi-Solheid-Tur\'{a}n type problem has a rich history, such as \cite{N2, W, BG, N3, N4, N5, CDT} and so on. For a comprehensive overview of the developments in this area, we refer the reader to the survey in \cite{LN}.

It is a natural question to ask what is the maximum spectral radius of an $H$-free graph with given size, which is commonly referred to as Brualdi-Hoffman-Tur\'{a}n type problem (see \cite{BH}). Li, Zhai, and Shu \cite{LZS} proposed a conjecture on cycle-version.
\begin{conjecture}\cite{LZS}\label{conjec1.2}
For $k\geqslant 2$ and $m$ sufficiently large, if $G$ is a $C_{2k+1}$-free or $C_{2k+2}$-free with given size $m$, we have 
$\rho(G)\leqslant \frac{k-1+\sqrt{4m-k^2+1}}{2}$, the equality holds if and only if $G\cong K_k\vee(\frac{m}{k}-\frac{k-1}{2})K_1$.
\end{conjecture}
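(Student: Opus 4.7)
The conjectured extremum $K_k\vee tK_1$, with $t=m/k-(k-1)/2$, has longest cycle $2k$ and is therefore both $C_{2k+1}$-free and $C_{2k+2}$-free; its Perron eigenvalue, obtained from the equitable partition into core and independent part, satisfies $\rho(K_k\vee tK_1)=\frac{k-1+\sqrt{4m-k^2+1}}{2}$, equivalently $\rho^2-(k-1)\rho=m-\binom{k}{2}$. My plan is to prove the matching upper bound $\rho^2-(k-1)\rho\leq m-\binom{k}{2}$ for every $C_{2k+2}$-free graph $G$ of size $m$ (the $C_{2k+1}$-free case being parallel since $K_k\vee tK_1$ is also $C_{2k+1}$-free), with equality only for the stated join.

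First I would take $G$ to be extremal and, passing to a principal component, assume $G$ is connected. Let $x$ be the Perron eigenvector of $G$ normalized by $x_{u^*}=\|x\|_\infty=1$. Expanding $\rho^2 x_{u^*}=(A^2x)_{u^*}$ and $\rho x_{u^*}=(Ax)_{u^*}$, and using $(A^2)_{u^* u^*}=d(u^*)$, $(A^2)_{u^* v}=c(u^*,v):=|N(u^*)\cap N(v)|$ for $v\neq u^*$, I obtain the key identity
\[
\rho^2-(k-1)\rho \;=\; d(u^*)+\sum_{v\sim u^*}\bigl(c(u^*,v)-(k-1)\bigr)x_v+\sum_{w\notin N[u^*]}c(u^*,w)\,x_w.
\]
By extremality $\rho(G)\geq\frac{k-1+\sqrt{4m-k^2+1}}{2}=\Theta(\sqrt m)$, hence $d(u^*)\geq\rho=\Omega(\sqrt m)$; this quantitative foothold fuels all subsequent structural control.

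The structural step is to show that the two correction sums above are, in aggregate, at most $m-\binom{k}{2}-d(u^*)$, with equality only for the join. I would argue this in two sub-steps. First, extract from $N(u^*)$ a set $S$ of $k-1$ vertices with $x_v$ close to $1$ such that $\{u^*\}\cup S$ spans a $K_k$; the high value of $\rho$ forces $d(u^*)$ to be large, and then $C_{2k+2}$-freeness applied to fan-configurations inside $N(u^*)$ rules out the ``spread'' alternatives that would pack additional edges without creating the forbidden cycle. Second, argue that every vertex $w\notin\{u^*\}\cup S$ is adjacent only to vertices of $\{u^*\}\cup S$: any edge placed outside the core or missing inside it can be swapped toward $K_k\vee tK_1$ in a way that strictly increases $\rho$, by a local-modification (edge-shifting) argument that compares Perron weights and exhibits an explicit $C_{2k+2}$ whenever a swap would otherwise create one.

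The main obstacle is precisely this stability step, forcing \emph{exact} equality with $K_k\vee tK_1$ rather than merely proximity. The hypothesis that $m$ is sufficiently large is used twice here: to guarantee $\rho$ and $d(u^*)$ are large enough for the fan-counting to force the core $K_k$, and to absorb lower-order error from vertices with small Perron weight. Once $G\cong K_k\vee tK_1$ is pinned down, equality in the displayed identity, and hence in the claimed spectral inequality, follows automatically from the opening computation, completing both the bound and the characterization of the extremal graph.
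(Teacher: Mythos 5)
The statement you are asking about is Conjecture \ref{conjec1.2}, which this paper merely quotes from Li--Zhai--Shu as background: it offers no proof of it, only the remarks that the $k=2$ case was settled in \cite{ZLS,MLH,SLW} and that the stronger Conjectures \ref{conjec1.3} and \ref{conjec1.4} have since been confirmed elsewhere. So there is no in-paper argument to compare against, and your proposal must stand on its own.

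As written it does not. The opening is correct: $K_k\vee tK_1$ has circumference $2k$, hence lies in both forbidden-cycle families; its spectral radius is $\frac{k-1+\sqrt{4m-k^2+1}}{2}$; and the identity for $\rho^2-(k-1)\rho$ at the maximum Perron entry (the $k=2$ instance of which is exactly the paper's equations (\ref{e2})--(\ref{e3})) is the standard starting point. But everything after that is a description of what would need to be proved rather than a proof. The two sub-steps you isolate --- extracting a core $K_{k}$ on $\{u^*\}\cup S$ with Perron weights near $1$, and showing every other vertex attaches only to that core --- are essentially the entire content of the theorem, and you supply no mechanism for either beyond ``fan-counting'' and ``local modification,'' nor any control of the lower-order error terms needed to convert near-equality into the exact characterization. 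Two concrete defects: (i) your reduction of the $C_{2k+1}$-free case to the $C_{2k+2}$-free case is invalid, since a $C_{2k+1}$-free graph need not be $C_{2k+2}$-free; that the candidate extremal graph lies in both families does not transfer an upper bound from one family to the other, and in the published resolutions the odd and even cases genuinely require separate analyses. (ii) Any edge-shifting step must be verified to preserve $C_{2k+2}$-freeness of the modified graph --- this is precisely where such arguments break, and is why the paper's own Lemma \ref{star} and Lemma \ref{W} spend most of their effort checking that each rewiring creates no forbidden subgraph. Asserting that one can always ``swap toward $K_k\vee tK_1$'' without exhibiting the swaps leaves the stability step unproved. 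In short, the skeleton matches the approach used throughout this literature (and by this paper for the gem), but the proposal is a plan, not a proof.
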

Zhai, Lin and Shu \cite{ZLS} solved Conjecture \ref{conjec1.2} for $k=2$, while the extremal graph $K_2\vee\frac{m-1}{2}K_1$ exits only for odd size $m$. Then Min, Lou and Huang \cite{MLH} and Sun, Li and Wei \cite{SLW} characterized the extremal graph for even size $m$ when $k=2$. Let $C_k^+$ denote the graph obtained from the $C_k$ by adding an edge between two vertices of distance two. Observe that if $G$ is $C_k$-free, then $G$ is $C_k^+$-free. A strengthening version of Conjecture \ref{conjec1.2} was proposed in \cite{LZS}.
\begin{conjecture}\cite{LZS}\label{conjec1.3}
For $k\geqslant 2$ and $m$ sufficiently large, if $G$ is a $C_{2k+1}^+$-free or $C_{2k+2}^+$-free with given size $m$, we have 
$\rho(G)\leqslant \frac{k-1+\sqrt{4m-k^2+1}}{2}$, the equality holds if and only if $G\cong K_k\vee(\frac{m}{k}-\frac{k-1}{2})K_1$.
\end{conjecture}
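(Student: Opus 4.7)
The plan is to start from a spectral-extremal graph $G$ that is $C_{2k+1}^+$-free (respectively $C_{2k+2}^+$-free) of size $m$, and show $G\cong K_k\vee tK_1$ with $t=\frac{m}{k}-\frac{k-1}{2}$; the bound on $\rho(G)$ then drops out of the quotient-matrix computation on the canonical equitable partition of $K_k\vee tK_1$. First I would reduce to the connected case by a Kelmans-type edge-switching argument (moving an edge from a component of smaller Perron weight across to a high-weight vertex of the other component strictly increases $\rho$, and a local check rules out any newly created $C_k^+$). Let $x$ denote the Perron vector normalized by $\max_v x_v=x_{u^*}=1$, and write $\rho=\rho(G)\geqslant \rho_0:=\frac{k-1+\sqrt{4m-k^2+1}}{2}$.

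The crux is to exploit the $C_k^+$-free hypothesis to restrict the neighbourhood structure around $u^*$. For any edge $uv$, each common neighbour already yields a triangle; more importantly, a $u$--$v$ path of length $k-1$ otherwise disjoint from $uv$ together with the chord $uv$ produces a forbidden $C_k^+$. Iterating this, together with the eigenvalue identity
\[
\rho^2 x_{u^*}=\sum_{v\sim u^*}\sum_{w\sim v}x_w,
\]
pins down $N(u^*)$: only boundedly many (at most $k-1$) vertices can carry sufficiently large Perron entry together with $u^*$. Define a heavy set $L=\{v:\,x_v\geqslant \eta\}$ for a carefully chosen threshold $\eta=\eta(k,m)$; the preceding control combined with the walk-count identity $\rho^2\|x\|_2^2=\sum_v\bigl(\sum_{w\sim v}x_w\bigr)^2$ forces $|L|\leqslant k$ once $m$ is large.

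To finish, I would upgrade this local control into global rigidity: $L$ induces a $K_k$, $V(G)\setminus L$ is independent, and every vertex of $V(G)\setminus L$ has neighbourhood exactly $L$, so that $G=K_k\vee\bigl(V(G)\setminus L\bigr)$. The forbidden $C_k^+$ enters globally here: any edge inside $V(G)\setminus L$, or any low-weight vertex whose neighbourhood is a proper subset of $L$, produces a detour path through $L$ of the correct length closing with a chord into a $C_{2k+1}^+$ or $C_{2k+2}^+$. Once this is established, the edge count $m=\binom{k}{2}+kt$ determines $t$ and identifies $G$ with $K_k\vee tK_1$. The decisive obstacle will be precisely this global rigidity step: intermediate-weight vertices sitting between the heavy clique and the pendant-like low-weight set are not ruled out by purely local eigenvector estimates, and the parity of the forbidden cycle forces separate arguments for the odd ($C_{2k+1}^+$) and even ($C_{2k+2}^+$) cases, since paths through $L$ of differing lengths close up into $C_k^+$'s of differing lengths. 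This is why Conjecture \ref{conjec1.3} remains open for general $k$, and why the partial results cited above are restricted to small cycle length.
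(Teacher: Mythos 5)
You are addressing a statement that this paper does not prove: Conjecture~\ref{conjec1.3} is quoted from \cite{LZS}, and the paper merely records that it has since been completely confirmed by Li, Zhai and Shu before moving on to its own result (Theorem~\ref{th1}, the gem case). So there is no in-paper proof to compare against, and your text must stand on its own. It does not: it is a research programme rather than a proof, and you concede as much. You single out the ``global rigidity'' step --- showing that the heavy set $L$ induces a $K_k$, that $V(G)\setminus L$ is independent, and that every light vertex is joined to all of $L$ --- as ``the decisive obstacle,'' and you end by asserting that the conjecture ``remains open.'' That concession is fatal, since essentially the entire content of the conjecture lives in that step; nothing in your sketch supplies it. (The closing claim is also factually wrong: the paper states explicitly that Conjecture~\ref{conjec1.3} has been completely resolved.)

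Even the preparatory steps are asserted rather than established. The claim that at most $k-1$ vertices can carry large Perron entry together with $u^*$, and the subsequent bound $|L|\leqslant k$, require a concrete threshold $\eta$, an edge count against $L$, and a comparison with an inequality of the shape $\rho^2-(k-1)\rho\geqslant m-O(1)$; you provide none of these, and the two bounds you state ($k-1$ versus $k$) do not even match. Your description of how the forbidden subgraph arises is also off: $C_k^+$ is a $k$-cycle together with a chord joining two vertices at distance two, so a $u$--$v$ path of length $k-1$ closed by the edge $uv$ yields only $C_k$; you must exhibit the extra chord, and the parity bookkeeping for $C_{2k+1}^+$ versus $C_{2k+2}^+$ is exactly where the argument becomes delicate. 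Finally, the extremal graph $K_k\vee(\frac{m}{k}-\frac{k-1}{2})K_1$ exists only when $k\mid m$ (and for suitable parity of $k$), so the equality characterization must treat the residue classes of $m$ separately --- this nonexistence for general $m$ is precisely the phenomenon that motivates the present paper's Theorem~\ref{th1} for the gem. A workable proof, along the lines of the methods actually used in this paper and in \cite{LZS}, analyzes the components of $G[N(u^*)]$ via an $\eta$-type functional and edge-switching lemmas such as Lemma~\ref{big}; none of that machinery appears in your sketch.
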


Conjecture \ref{conjec1.3} has been solved for $C_5^+$ and $C_6^+$ \cite{SLW}, namely when $m$ is odd, Brualdi-Hoffman-Tur\'{a}n type problem of $C_5^+$ and $C_6^+$ have been solved. When $m$ is even, Fang and You \cite{FY} and Liu and Wang \cite{LW} solved Brualdi-Hoffman-Tur\'{a}n type problem of $C_5^+$ and $C_6^+$ respectively. Recently, Conjecture \ref{conjec1.3} has been completely confirmed by Li, Zhai, and Shu in \cite{LZS}. Let $H_t=K_1\vee P_{t-1}$. Observe that if $G$ is $C_k^+$-free, then $G$ is $H_k$-free. Yu, Li and Peng \cite{YLP} proposed a strengthening version of Conjecture \ref{conjec1.3}.
\begin{conjecture}\cite{YLP}\label{conjec1.4}
For $k\geqslant 2$ and $m$ sufficiently large, if $G$ is an $H_{2k+1}$-free or $H_{2k+2}$-free with given size $m$, we have 
$\rho(G)\leqslant \frac{k-1+\sqrt{4m-k^2+1}}{2}$, the equality holds if and only if $G\cong K_k\vee(\frac{m}{k}-\frac{k-1}{2})K_1$.
\end{conjecture}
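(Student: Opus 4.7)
The plan is to tackle the stronger $H_{2k+2}$-free case and observe that the $H_{2k+1}$-free statement is then automatic: since $H_{2k+1}\subset H_{2k+2}$, any $H_{2k+1}$-free graph is $H_{2k+2}$-free, and the conjectured extremal graph $K_k\vee tK_1$ with $t=m/k-(k-1)/2$ is $H_{2k+2}$-free (inside the neighborhood of any vertex $v$ the induced subgraph is either $K_{k-1}\cup tK_1$ or $K_k$, neither of which contains a $P_{2k+1}$). Let $G$ attain the maximum spectral radius $\rho=\rho(G)$ among $H_{2k+2}$-free graphs with $m$ edges. A direct computation shows $\rho(K_k\vee tK_1)$ satisfies $\rho^2-(k-1)\rho-m+k(k-1)/2=0$, giving $\rho(K_k\vee tK_1)=\rho_0:=\frac{k-1+\sqrt{4m-k^2+1}}{2}$, hence $\rho\geq \rho_0$; the goal is the matching upper bound and equality characterization.

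Let $x$ be the Perron eigenvector normalized so that $x_{u^*}=1=\max_v x_v$, set $S:=N(u^*)$ and $T:=V(G)\setminus(\{u^*\}\cup S)$. Since $u^*$ joined to any $P_{2k+1}$ in $G[S]$ produces an $H_{2k+2}$, $G[S]$ is $P_{2k+1}$-free, so by Erdős--Gallai $e(G[S])\leq (2k-1)|S|/2$. Applying $A^2x=\rho^2x$ at $u^*$ and using $x_v\leq 1$ gives
\[
\rho^2 = d(u^*)+\sum_{v\in S}d_S(v)\,x_v+\sum_{v\in T}|N(v)\cap S|\,x_v \leq |S|+2e(S)+e(S,T)=m+e(S)-e(T),
\]
where the last identity uses $m=|S|+e(S)+e(S,T)+e(T)$. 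The target $\rho\leq \rho_0$ is equivalent to $\rho^2-(k-1)\rho\leq m-k(k-1)/2$, so the task reduces to the sharp inequality
\[
e(S)-e(T)\leq (k-1)\rho-\tfrac{k(k-1)}{2},
\]
to be obtained after absorbing the slack from the crude estimate $x_v\leq 1$.

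To promote this approximate inequality to the sharp form and to pin down the extremal graph, I would follow the standard localization strategy. First, an edge-switching argument --- deleting an edge in $T$ or an edge incident to a Perron-light vertex and reattaching it at $u^*$ or at a Perron-heavy vertex --- should force $e(T)=0$ in the extremum and confine the neighbors of every vertex of $T$ to a small ``core'' inside $S$. Second, one must locate $k-1$ vertices $w_1,\dots,w_{k-1}\in S$ with $x_{w_i}\to 1$ as $m\to\infty$, show that $\{u^*,w_1,\dots,w_{k-1}\}$ induces a $K_k$, and prove each $w_i$ is adjacent to every vertex of $T$. Third, the $P_{2k+1}$-free structure of $G[S]$ must be sharpened to guarantee that $S\setminus\{w_1,\dots,w_{k-1}\}$ is independent. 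The main obstacle is this third step for $k\geq 3$: the Erdős--Gallai bound is attained by disjoint copies of $K_{2k}$, and ruling out such clusters in $S$ requires showing they are incompatible with having any vertex of weight $\Omega(1)$ inside them (either by constructing an $H_{2k+2}$ using a Perron-heavy vertex as apex, or by a rank-one perturbation that strictly increases $\rho$ while preserving $H_{2k+2}$-freeness). This simultaneous stability analysis coupling the Perron structure with the Erdős--Gallai extremal configurations is precisely what has been carried out only for $k=2$ (the gem) so far, and its extension to general $k$ is the decisive ingredient.
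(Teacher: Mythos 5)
There is a genuine gap here, and you name it yourself. The statement you are asked about is a \emph{conjecture} (due to Yu, Li and Peng); the present paper does not prove it in general --- it records that the full conjecture was resolved by Li, Zhou and Zou \cite{LZZ}, and the paper's own contribution is only the $k=2$ instance (the gem), specifically the even-size case where the conjectured extremal graph $K_2\vee\frac{m-1}{2}K_1$ does not even exist. Your first two paragraphs set up exactly the framework the paper uses for $k=2$ (Perron vector normalized at $u^*$, $S=N(u^*)$, $T$ the rest, $P_{2k+1}$-freeness of $G[S]$, and the identity $\rho^2\leqslant m+e(S)-e(T)$, which is the paper's equations (\ref{e2})--(\ref{23}) specialized appropriately), and your reduction of the $H_{2k+1}$ case to the $H_{2k+2}$ case is sound (modulo the typo: the neighborhood of a clique vertex of $K_k\vee tK_1$ induces $K_{k-1}\vee tK_1$, not $K_{k-1}\cup tK_1$; the longest path there has $2k-1$ vertices, so the conclusion stands). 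But the third paragraph is a plan, not an argument: the decisive step --- showing that the $P_{2k+1}$-free graph $G[S]$ cannot contain Erd\H{o}s--Gallai-extremal clusters such as disjoint copies of $K_{2k}$, locating the $k-1$ heavy vertices, and forcing $T=\emptyset$ --- is precisely where all the work lies, and you explicitly leave it unresolved for $k\geqslant 3$. A proof attempt that defers its main obstacle cannot be accepted as a proof of the conjecture.

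Two further cautions. First, even for $k=2$ the crude bound $x_v\leqslant 1$ is too lossy to yield the target inequality $e(S)-e(T)\leqslant(k-1)\rho-\frac{k(k-1)}{2}$ directly; the paper instead keeps the eigenvector weights and works with the quantity $\eta_1(V)=\sum_{u\in V}(d_V(u)-1)x_u-e(V)$, bounding it component by component over the (very restricted) components of the $P_4$-free graph $G[U]$ ($K_3$, stars, isolated vertices), and then uses edge-switching and the vertex-splitting Constructions 1--2 to eliminate $W$. For general $k$ the components of a $P_{2k+1}$-free graph have no such clean classification, so the analogue of your Lemma-\ref{star}-type step is qualitatively harder, not just longer. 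Second, the equality characterization in the conjecture only makes sense when $\frac{m}{k}-\frac{k-1}{2}$ is a positive integer; for other residues of $m$ the bound is strict and a separate extremal graph must be identified (this is exactly what Theorem \ref{th1} does for $k=2$ and $m$ even, producing $S_{\frac{m+4}{2},2}^{-1}$ and the root $\rho^*(m)$). Your proposal does not address this divisibility issue at all.
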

Recently, Li, Zhou, and Zou \cite{LZZ} completely solved Conjecture \ref{conjec1.4}. When $t=5$, $H_5$ is also called the gem. Zhang and Wang \cite{ZW} and Yu, Li, Peng \cite{YLP} solved Conjecture \ref{conjec1.4} for the case $H_5$. 
\begin{theorem}\cite{ZW, YLP}\label{gem}
If $G$ is a gem-free graph of size $m\geqslant 11$ without isolated vertices, then
$\rho(G)\leqslant \frac{1+\sqrt{4m-3}}{2},$
the equality holds if and only if $G\cong K_2\vee\frac{m-1}{2}K_1$.
\end{theorem}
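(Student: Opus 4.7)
The plan is to recast the bound as $\rho^2 - \rho \leqslant m - 1$, since $\rho_0 := \frac{1+\sqrt{4m-3}}{2}$ satisfies $\rho_0^2 = \rho_0 + m - 1$, and to prove it by a Perron-eigenvector analysis that exploits the fact that gem-freeness forces $G[N(v)]$ to be $P_4$-free (equivalently, a cograph) for every vertex $v$, since the gem is $K_1 \vee P_4$. Let $x$ be the Perron eigenvector of $G$ normalized so that $\max_v x_v = x_{u_1} = 1$; set $U := N(u_1)$ and $W := V(G) \setminus (\{u_1\} \cup U)$. In particular, $G[U]$ is a cograph.

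First I would combine $\rho x_{u_1} = \sum_{v \in U} x_v$ with $\rho^2 x_{u_1} = d(u_1) + \sum_{v \neq u_1} |N(u_1) \cap N(v)|\, x_v$ to obtain the identity
\[
\rho^2 - \rho \;=\; \sum_{v \in U}(1 - x_v) \;+\; \sum_{v \in U} d_{G[U]}(v)\, x_v \;+\; \sum_{v \in W} |N(v) \cap U|\, x_v.
\]
The crude estimate $x_v \leqslant 1$ alone yields only $\rho^2 \leqslant m + e(G[U]) - e(G[W])$, which is not tight enough. The key refinement is to use $\rho x_v \leqslant d(v)$, i.e.\ $x_v \leqslant d(v)/\rho$, which is sharp for low-degree vertices; combined with the cograph structure of $G[U]$, this should suffice to upgrade the inequality to $\rho^2 - \rho \leqslant m - 1$.

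The hard part will be the structural analysis that drives the refinement. Using that a connected cograph with at least one edge is a join $H_1 \vee H_2$, I would first identify a vertex $u_2 \in U$ adjacent to all other vertices of $U$ and then use the eigenvalue equation at $u_2$ to show $x_{u_2} = 1$. A further case analysis on the sizes of $H_1$, $H_2$ and on possible vertices of $W$ (where the hypothesis $m \geqslant 11$ is used to make leading-order terms dominate the error) should then force $W = \emptyset$, $G[U \setminus \{u_2\}]$ edge-free, and every vertex of $U \setminus \{u_2\}$ of degree exactly $2$. This identifies $G \cong K_2 \vee kK_1$ with $K_2 = \{u_1, u_2\}$, and the size equation $m = 2k + 1$ gives $k = (m-1)/2$. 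Conversely, a quotient-matrix computation shows $K_2 \vee \frac{m-1}{2}K_1$ attains $\rho = \rho_0$, so this graph is the unique extremal.
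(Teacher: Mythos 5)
This theorem is quoted by the paper from \cite{ZW, YLP} rather than proved in it, but the paper's proof of its own Theorem 1.5 uses exactly the framework you set up (the identity $\rho^2-\rho=\sum_{u\in U}(1-x_u)+\sum_{u\in U}d_U(u)x_u+\sum_{w\in W}d_U(w)x_w$ and the $U$/$W$ decomposition), so your opening is on the right track. The problem is the structural step, where you make a genuine error: gem-freeness forbids $P_4$ as a \emph{subgraph} of $G[U]$, not merely as an induced subgraph, so ``$P_4$-free'' here is \emph{not} equivalent to ``cograph.'' For instance $C_4=K_{2,2}$ is a cograph, yet $K_1\vee C_4$ contains the gem, so your structural hypothesis would admit neighborhoods that are in fact forbidden. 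The correct (and much stronger) consequence, which the paper uses, is that every connected component of $G[U]$ is a triangle or a star $K_{1,a}$; this is what makes the bookkeeping $\sum_{u\in U}(d_U(u)-1)x_u\leqslant e(U)-(\text{number of star components})$ work and closes the gap between the crude estimate and $\rho^2-\rho\leqslant m-1$.

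Two further steps in your plan would fail as stated. First, even a connected cograph need not have a dominating vertex (again $K_{2,2}$), and in any case $G[U]$ need not be connected, so ``identify $u_2\in U$ adjacent to all other vertices of $U$'' is not available from the join decomposition; in the paper's argument the single star component of $G[U]$ is extracted only after rotation arguments (Lemma 2.1) eliminate triangle components and multiple star components. Second, the claim that the eigenvalue equation at $u_2$ gives $x_{u_2}=1$ is unjustified for a general gem-free graph and is only true (by symmetry) once extremality of the target graph has already been established; you cannot use it while proving the upper bound. Finally, the entire case analysis that is supposed to force $W=\emptyset$ and the star structure is announced but not carried out, and this is where all the real work lies (in the paper it occupies Lemmas 3.1 and 3.2, using edge-rotation and a vertex-splitting construction). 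As it stands the proposal is a plausible outline with a wrong structural lemma at its core, not a proof.
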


Let $S_{n,k}$ be the graph obtained by joining each vertex of $K_k$ to $n-k$ isolated vertices. Note that $K_2\vee\frac{m-1}{2}K_1\cong S_{\frac{m+3}{2},2}$. Let $S_{n,2}^{-t}$ be the graph obtained from $S_{n,2}$ by deleting $t$ edges between a vertex of degree $n-1$ and $t$ vertices of degree $2$. Let $\rho^*(m)$ be the largest root of $x^4-mx^2-(m-2)x+(\frac{1}{2}m-1)=0$. In \cite{MLH}, it was pointed out that $\rho(S_{\frac{m+4}{2},2}^{-1})=\rho^*(m)$. 

The extremal graph of the gem in Theorem \ref{gem} exits only when $m$ is odd. Motivated by this observation, in this paper, we completely solve the Brualdi-Hoffman-Tur\'{a}n type problem of the gem by proving the following Theorem \ref{th1}. 

\begin{theorem}\label{th1}
If $G$ is a gem-free graph of size $m\geqslant92$ without isolated vertices, then 
$$\rho(G)\leqslant \left\{
                  \begin{array}{ll}
                    \frac{1+\sqrt{4m-3}}{2}, & \hbox{if $m$ is odd,} \\
                   \rho^*(m), & \hbox{if $m$ is even.}
                  \end{array}
                \right.
$$
Moreover, the equality holds if and only if $G\cong \left\{
                  \begin{array}{ll}
                    S_{\frac{m+3}{2},2}, & \hbox{if $m$ is odd,} \\
                    S_{\frac{m+4}{2},2}^{-1}, & \hbox{if $m$ is even.}
                  \end{array}
                \right.$
\end{theorem}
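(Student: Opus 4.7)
Since the odd case of Theorem \ref{th1} coincides with Theorem \ref{gem}, only the even case requires new work. Let $G$ be a gem-free graph of even size $m \geq 92$ without isolated vertices that maximizes the spectral radius, and let $\rho = \rho(G)$. Let $x$ be the Perron eigenvector of $G$, normalized so that $x_{u^*} = \max_v x_v = 1$. My plan is to show $G \cong S_{\frac{m+4}{2},2}^{-1}$ by combining extremality with the structural constraint that $G[N(v)]$ is $P_4$-free for every vertex $v$, since otherwise $v$ together with such a $P_4$ would form a gem.

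First I would verify directly that $S_{\frac{m+4}{2},2}^{-1}$ is gem-free---each hub neighborhood is a star (possibly plus an isolated vertex) and every other neighborhood has at most two vertices---which yields the benchmark $\rho \geq \rho^*(m)$. Next, from the eigenvalue equation together with the lower bound $\rho^2 \geq \rho^*(m)^2 \geq m - O(1)$, I would deduce by a two-step-walk count at $u^*$ that $d(u^*) \geq m/2 - O(1)$ and that there is a second vertex $u^{**}$ whose Perron weight is close to $1$. The cograph constraint on every neighborhood, combined with local edge-exchange arguments (shifting an edge away from low-weight vertices onto $\{u^*, u^{**}\}$ cannot decrease $\rho$, provided gem-freeness is preserved), should then force essentially every edge of $G$ to be incident to $\{u^*, u^{**}\}$. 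In other words, $G$ is a spanning subgraph of $S_{n,2}$ for some $n$, with the two hubs being $u^*$ and $u^{**}$.

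Once $G \subseteq S_{n,2}$, an edge count gives $m = 2n - 3 - t$ with $t$ the number of deleted edges, and the parity of $m$ forces $t \geq 1$. Maximizing $\rho$ over the choice of $n$ and the deletion pattern should then pick out $n = (m+4)/2$ and $t = 1$ with the deleted edge joining a degree-$(n-1)$ hub to a leaf, i.e.\ $G \cong S_{\frac{m+4}{2},2}^{-1}$. This last identification reduces to a direct comparison of the quartic $x^4 - mx^2 - (m-2)x + (\frac{1}{2}m-1) = 0$ defining $\rho^*(m)$ with the characteristic polynomials of the alternative configurations with larger $t$ or a different deletion pattern.

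The hardest step will be the surgical lemma confining every edge to be incident to $\{u^*, u^{**}\}$. Near-extremal gem-free competitors---such as graphs containing a few ``cross'' edges among low-weight vertices, or $S_{n+1,2}$ with two deleted edges, or configurations with a small triangle attached to one hub---are gem-free and have spectral radii within $O(1/m)$ of $\rho^*(m)$, so distinguishing them requires polynomial inequalities whose error terms are of the same order as the gap to be proved. This is presumably where the explicit threshold $m \geq 92$ enters: it must be chosen large enough that the leading-order comparisons dominate both the Perron-vector error estimates and the parity-induced deficit coming from the single missing edge.
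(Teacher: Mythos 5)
Your proposal has the right target --- reduce the extremal graph to a subgraph of $S_{n,2}$ with two hubs and then let parity force $t=1$ --- and the final comparison among the graphs $S_{\frac{m+t+3}{2},2}^{-t}$ is indeed a known quartic/characteristic-polynomial computation (Lemma \ref{t-}, quoted from \cite{FY}). But the central step, which you yourself flag as ``the hardest,'' is left as ``should then force'' and this is a genuine gap: essentially all of the technical content of the theorem lives there. The paper does not prove $d(u^*)\geqslant m/2-O(1)$ or locate a second heavy vertex $u^{**}$ a priori. Instead it works only with $U=N(u^*)$ and the quantity $\eta_1(U)=\sum_{u\in U}(d_U(u)-1)x_u-e(U)$, for which the eigenvalue identity plus $\rho^2-\rho>m-\tfrac32$ give $\eta_1(U)\geqslant e(W)-\tfrac32$. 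Since $G[U]$ is $P_4$-free, each component is $K_3$ or a star, and each nontrivial star component contributes at most $-1$ to $\eta_1(U)$; this single inequality immediately kills $K_3$ components, forces exactly one nontrivial star component (whose center becomes the second hub), and gives $e(W)=0$. This bookkeeping is exactly what dissolves your worry about near-extremal competitors within $O(1/m)$: no fine spectral comparison is ever needed at this stage, only integrality of $e(W)$ and $|\mathcal{H}_2|$ against a budget of $3/2$.

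The second concrete gap is the elimination of $W$. A vertex $w\in W$ necessarily has all its neighbors among the leaves of the star in $U$ and has degree $d_i\geqslant 2$, and a single edge shift onto a hub does not work: it either destroys gem-freeness or changes nothing decisive. The paper's Lemma \ref{W} handles this with a two-stage construction that deletes all $d_i$ edges at $w_i$ and reinstalls them as $\lceil d_i/2\rceil$ new vertices joined to $u^*$ and to the star center $v$ (with one pendant edge when $d_i$ is odd to preserve the edge count), then proves the spectral radius strictly increases by evaluating $\bm{y}^{T}(A(G_c')-A(G_c))\bm{z}$ with the Perron vectors of the two graphs, using $x_{w_i}<x_v\leqslant x_{u^*}$. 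Without an argument of this kind --- or some substitute that simultaneously preserves $m$, preserves gem-freeness, and strictly increases $\rho$ --- your plan does not close, and the threshold $m\geqslant 92$ enters only through the crude bound $\rho>10$ used once in ruling out $K_3$ components, not through the delicate competitor comparisons you anticipate.
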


\section{Preliminaries}
For a vertex $u\in V(G)$, let $N_G(u)$ be the neighborhood of $u$. For convenience, we use $N_S(u)$ to refer to the intersection $N_G(u)\cap S$, where $S\subseteq V(G)$. For a subset $A\subseteq V(G)$, we denote by $e(A)$ the number of edges with both endpoints in $A$. For two disjoint sets $A$, $B$, we define $e(A, B)$ to be the number of edges between $A$ and $B$. 

\begin{lemma}\cite{WXY}\label{big}
Let $u$, $v$ be two distinct vertices in a connected graph $G$. Suppose that $v_1, v_2, \cdots, v_s (1\leqslant s\leqslant d_G(v))$ are some vertices of $N_G(v)\setminus N_G(u)$, and $\bm{x}$ is a Perron vector of $A(G)$ with coordinate $x_z$ corresponding to $z\in V(G)$. Let $G'=G-\{vv_i|1\leqslant i\leqslant s\}+\{uv_i|1\leqslant i\leqslant s\}$. If $x_u\geqslant x_v$, then $\rho(G')>\rho(G)$.
\end{lemma}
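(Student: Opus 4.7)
The plan is to prove this via the standard Rayleigh-quotient trick for edge-switching lemmas, with a short argument at the end to upgrade the weak inequality to a strict one using the eigenvalue equation at $v$.

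First I would set $\rho := \rho(G)$ and use the Perron vector $\bm{x}$ of $G$ (guaranteed positive by connectivity and Perron--Frobenius) as a test vector for $A(G')$. Since $G'$ differs from $G$ by deleting the edges $\{vv_i\}_{i=1}^s$ (which contribute $2\sum_i x_v x_{v_i}$ to the quadratic form) and adding the edges $\{uv_i\}_{i=1}^s$ (contributing $2\sum_i x_u x_{v_i}$), a direct expansion gives
\begin{equation*}
\bm{x}^T A(G')\bm{x} - \bm{x}^T A(G)\bm{x} \;=\; 2(x_u - x_v)\sum_{i=1}^s x_{v_i}.
\end{equation*}
Here the $v_i$ are honest vertices of $G$ different from $u$ (since $v_i \in N_G(v)\setminus N_G(u)$ forbids $v_i=u$), so $x_{v_i}>0$. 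The hypothesis $x_u\geqslant x_v$ therefore gives $\bm{x}^T A(G')\bm{x}\geqslant \bm{x}^T A(G)\bm{x}=\rho\,\bm{x}^T\bm{x}$, and the Rayleigh characterization yields $\rho(G')\geqslant \rho$.

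To get the strict inequality, I would argue by contradiction: assume $\rho(G')=\rho$. Then equality must hold in the Rayleigh quotient, so $\bm{x}$ is an eigenvector of $A(G')$ for the eigenvalue $\rho=\rho(G')$. Because $\bm{x}>0$ componentwise, restricting to any connected component of $G'$ that contains a vertex shows $\bm{x}$ agrees (up to scaling) with that component's Perron vector, and in particular $A(G')\bm{x}=\rho\bm{x}$ componentwise. Now compare the eigenvalue equations at the vertex $v$ in $G$ and in $G'$: since $N_{G'}(v) = N_G(v)\setminus\{v_1,\dots,v_s\}$,
\begin{equation*}
\rho x_v \;=\; \sum_{w\in N_G(v)} x_w \quad\text{and}\quad \rho x_v \;=\; \sum_{w\in N_G(v)} x_w \;-\;\sum_{i=1}^s x_{v_i}.
\end{equation*}
Subtracting gives $\sum_{i=1}^s x_{v_i}=0$, which contradicts $x_{v_i}>0$ for all $i$. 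Hence $\rho(G')>\rho(G)$.

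I do not anticipate a real obstacle here; the only place requiring mild care is the strict inequality, where one must resist the temptation to immediately conclude $x_u=x_v$ from the Rayleigh equality (which would only give contradiction after further work) and instead exploit the eigenvalue equation at $v$, whose left-hand side is the same in $G$ and $G'$ but whose right-hand side changed by exactly the deleted neighbors' Perron entries. The argument does not need $G'$ to be connected, since a strictly positive eigenvector of $A(G')$ with eigenvalue $\rho(G')$ still satisfies the vertex-wise equation $\rho x_v = \sum_{w\in N_{G'}(v)} x_w$, which is all that is used.
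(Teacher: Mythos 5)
Your proof is correct and is essentially the standard argument for this lemma (which the paper only cites from [WXY] without proof): the Rayleigh quotient gives $\rho(G')\geqslant\rho(G)$, and equality is ruled out via the vertex eigen-equation --- you use the equation at $v$, where the sum drops by $\sum_{i=1}^s x_{v_i}>0$, while the usual write-up uses the equation at $u$, where it gains the same amount; the two are interchangeable. The only small slip is the parenthetical claim that $v_i\in N_G(v)\setminus N_G(u)$ by itself forbids $v_i=u$ (it does not, since $u\notin N_G(u)$ for a simple graph); rather, $v_i\neq u$ is an implicit hypothesis needed for $G'$ to be loopless, and granting it your argument goes through unchanged.
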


\begin{lemma}\cite{ZLS}\label{con}
Let $G$ be an extremal $F$-free graph without isolated vertices with given size $m$. If $F$ is a 2-connected graph, then $G$ is connected.
\end{lemma}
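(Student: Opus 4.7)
The odd case is immediate from Theorem~\ref{gem}: it gives $\rho(G)\le(1+\sqrt{4m-3})/2$ with equality iff $G\cong S_{(m+3)/2,2}$, and this extremal graph exists exactly when $m$ is odd. So my plan concentrates on the even case. For even $m\ge 92$, let $G$ be an extremal gem-free graph of size $m$ with no isolated vertices, write $\rho=\rho(G)$, and fix a Perron vector $\bm{x}$. First I would verify that $S_{(m+4)/2,2}^{-1}$ is itself gem-free (its neighborhoods are stars, hence $P_4$-free) and has exactly $m$ edges, which gives the baseline $\rho\ge\rho^*(m)$. Because the gem is $2$-connected, Lemma~\ref{con} forces $G$ to be connected.

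Let $u^*$ achieve $x_{u^*}=\max_v x_v$. The gem-free condition makes $G[N(u^*)]$ a $P_4$-free graph, i.e.\ a cograph. I would combine the eigenequations
\[
\rho\, x_{u^*}=\sum_{v\in N(u^*)}x_v,\qquad \rho^2 x_{u^*}=d(u^*)x_{u^*}+\sum_{w\ne u^*}|N(u^*)\cap N(w)|\,x_w,
\]
with the lower bound $\rho\ge\rho^*(m)\sim\sqrt{m}$ and the size constraint $e(G)=m$, to show that $d(u^*)$ is close to $m/2$ and to single out a second vertex $v^*\in N(u^*)$ whose Perron entry $x_{v^*}$ is close to $x_{u^*}$. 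The cograph structure of $G[N(u^*)]$ is crucial here: it prevents many vertices of $N(u^*)$ from simultaneously having large internal degree, so the ``apex'' role must concentrate on $\{u^*,v^*\}$.

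The next phase is symmetrization via Lemma~\ref{big}. For every edge $vw$ with $\{v,w\}\cap\{u^*,v^*\}=\emptyset$ we have $x_{u^*}\ge x_v$, so re-routing $vw$ to $u^*w$ (or to $v^*w$) strictly increases $\rho$ unless the resulting graph contains a gem. I would check that the re-routed graph remains gem-free---once again using the $P_4$-freeness of $G[N(u^*)]$---so by extremality every edge of $G$ is already incident to $\{u^*,v^*\}$ and $u^*v^*\in E(G)$. Hence $G$ is a spanning subgraph of $K_2\vee(n-2)K_1$ for some $n$, and counting edges shows that $G$ differs from $S_{(m+4)/2,2}$ (which has $m+1$ edges for even $m$) by exactly one deletion between an apex and a leaf, i.e.\ $G\cong S_{(m+4)/2,2}^{-1}$. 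The identity $\rho(S_{(m+4)/2,2}^{-1})=\rho^*(m)$ then falls out of the quotient matrix of the natural equitable $4$-partition, whose characteristic polynomial is $x^4-mx^2-(m-2)x+(m/2-1)$.

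The step I expect to be the main obstacle is upgrading ``$\rho\ge\rho^*(m)$'' to the structural assertion that essentially only two vertices $u^*,v^*$ carry Perron weight. Since $P_4$-free graphs form a broad class, $G[N(u^*)]$ could a priori contain many edges on many high-degree vertices. Controlling $\sum_{v\in N(u^*)\setminus\{v^*\}} x_v$ sharply enough to exclude a third near-apex will require a tailored inequality that exploits the cograph decomposition together with the hypothesis $m\ge 92$ to absorb lower-order error terms; once that is in place, Lemma~\ref{big} and standard eigenvalue perturbation arguments should close out the characterization.
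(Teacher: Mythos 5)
Your proposal does not prove the statement it is supposed to prove. The statement here is Lemma~\ref{con}: if $F$ is $2$-connected, then an extremal $F$-free graph of given size without isolated vertices is connected. What you have written instead is an outline of the proof of the main result, Theorem~\ref{th1} (the characterization of the extremal gem-free graph). Worse, your outline explicitly \emph{invokes} Lemma~\ref{con} (``Because the gem is $2$-connected, Lemma~\ref{con} forces $G$ to be connected''), so as an argument for Lemma~\ref{con} itself it would be circular. Nothing in your text engages with the actual content of the lemma, which is a general statement about an arbitrary $2$-connected forbidden graph $F$ and has nothing to do with the gem, with Perron-entry estimates near $u^*$, or with the graphs $S_{(m+3)/2,2}$ and $S_{(m+4)/2,2}^{-1}$.

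The argument the lemma needs is short and structural. Suppose $G$ is disconnected and let $G_1$ be a component with $\rho(G_1)=\rho(G)$ and $G_2$ any other component. Form $G'$ by identifying a vertex $u\in V(G_1)$ with a vertex $v\in V(G_2)$. Then $G'$ has the same size $m$, no isolated vertices, and fewer components; the identified vertex is a cut vertex of the new block structure, and since $F$ is $2$-connected, any copy of $F$ in $G'$ must lie inside a single block, hence inside (a copy of) some original component, so $G'$ is still $F$-free. Moreover $G'$ properly contains $G_1$ as a subgraph of a connected graph, so $\rho(G')>\rho(G_1)=\rho(G)$, contradicting extremality. Iterating gives connectedness. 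This is the point your proposal misses entirely: the $2$-connectedness of $F$ is used to show that merging components cannot create a forbidden subgraph, not to deduce anything about the gem-specific structure.
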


Let $G$ be a graph achieving maximum spectral radius of gem-free graphs without isolated vertices with $m\geqslant 92$ edges. By Lemma \ref{con}, $G$ is connected. Suppose $\rho$ is the spectral radius of $G$ and $\bm{x}$ is the positive eigenvector with $x_{u^*}=\max\{x_v:v\in V(G)\}=1$. Denote by $U=N_G(u^*)$, $W=V(G)\setminus (N_G(u^*)\cup \{u^*\})$ and $d_U(u) = |N_{G[U]}(u)|$ for a vertex $u\in V(G)$. Since
\begin{equation}\label{e2}
\rho^2=\rho^2x_{u^*}=d_G(u^*)+\sum_{u\in U}d_U(u)x_u+\sum_{w\in W}d_U(w)x_w,
\end{equation}
which together with $\rho=\rho x_{u^*}=\sum_{u\in U}x_u$ yields
\begin{equation}\label{e3}
\rho^2-\rho=d_G(u^*)+\sum_{u\in U}(d_U(u)-1)x_u+\sum_{w\in W}d_U(w)x_w.
\end{equation}
For an arbitrary vertices set $V$, define
\begin{equation}\label{def}
\eta_1(V)=\sum_{u\in V}(d_V(u)-1)x_u-e(V).
\end{equation}
If $V=\emptyset$, then set $\eta_1(V)=0$.

\begin{lemma}\cite{FY}\label{t-}
For $m>t+1$, $\rho(S_{\frac{m+t+3}{2},2}^{-t})<\left\{
                  \begin{array}{ll}
                   \rho(S_{\frac{m+3}{2},2}), & \hbox{if $t(\geqslant 2)$ is even,}\\
                    \rho(S_{\frac{m+4}{2},2}^{-1}), & \hbox{if $t(\geqslant 3)$ is odd.} 
                  \end{array}
                \right.$ 
\end{lemma}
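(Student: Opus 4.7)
My plan is to put both parities of $t$ on a common footing by writing down the characteristic polynomial of the quotient matrix coming from an equitable partition of $S_{\frac{m+t+3}{2},2}^{-t}$, and then comparing that polynomial with the polynomial governing the target spectral radius. Set $n=\frac{m+t+3}{2}$, let $u$ be the vertex of degree $n-1$ and $v$ the vertex of degree $n-1-t$, and split the remaining vertices into $P$, the $t$ pendants at $u$, and $D$, the $n-2-t$ degree-$2$ vertices adjacent to both $u$ and $v$. The partition $\{\{u\},\{v\},P,D\}$ is equitable, and a cofactor expansion of the resulting $4\times 4$ quotient matrix gives
\[
P_t(\lambda)=\lambda^{4}-m\lambda^{2}-(m-1-t)\lambda+\tfrac{t(m-1-t)}{2},
\]
so $\rho(S_{\frac{m+t+3}{2},2}^{-t})$ is the largest real root of $P_t$. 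Specializing to $t=1$ reproduces the polynomial that defines $\rho^{*}(m)$, so this is the right framework for both sides of the inequality.

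Next I would evaluate $P_t$ at the target. For even $t\ge 2$, put $\alpha=\frac{1+\sqrt{4m-3}}{2}$, so $\alpha^{2}=\alpha+(m-1)$; squaring yields $\alpha^{4}-m\alpha^{2}=(m-1)\alpha$ and therefore
\[
P_t(\alpha)=t\alpha+\tfrac{t(m-1-t)}{2},
\]
which is strictly positive under $m>t+1$. For odd $t\ge 3$, let $\beta=\rho^{*}(m)$ be the largest root of $P_1$; the explicit formulas give $P_t(\lambda)-P_1(\lambda)=(t-1)\lambda+\tfrac{(t-1)(m-2-t)}{2}$, whence $P_t(\beta)=(t-1)\bigl(\beta+\tfrac{m-2-t}{2}\bigr)>0$ under the same hypothesis.

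The last step, and the one I expect to be the real issue, is upgrading positivity of $P_t$ at the single point $\alpha$ (or $\beta$) to positivity on the whole ray $[\alpha,\infty)$, which is what actually forces the largest root of $P_t$ to lie to the left of $\alpha$. I would compute $P_t'(\lambda)=4\lambda^{3}-2m\lambda-(m-1-t)$ and $P_t''(\lambda)=12\lambda^{2}-2m$ and evaluate them at $\alpha$ using $\alpha^{3}=m\alpha+(m-1)$: this gives $P_t'(\alpha)=2m\alpha+3m+t-3>0$ and $P_t''(\alpha)=12\alpha+10m-12>0$ for $m\ge 2$. Since $P_t''$ is increasing for $\lambda>0$, both derivatives stay positive on $[\alpha,\infty)$, so $P_t$ is strictly increasing there and hence strictly positive there. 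The odd case is identical after noting $P_t'(\beta)=P_1'(\beta)+(t-1)>0$ and $\beta^{2}\ge \Delta(S_{\frac{m+4}{2},2}^{-1})=\tfrac{m+2}{2}>m/6$, which makes $P_t''(\beta)>0$. The role of the hypothesis $m>t+1$ is exactly to keep both $m-1-t$ and $m-2-t$ nonnegative in the decisive evaluations, so that every term on the right in the two target evaluations has the correct sign.
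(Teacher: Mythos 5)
This lemma is quoted in the paper from reference \cite{FY} and is not proved there, so there is no in-paper argument to compare against; judged on its own, your proof is correct and complete. The equitable partition $\{\{u\},\{v\},P,D\}$ is genuinely equitable, and I verified the quotient characteristic polynomial: with $s=\frac{m-1-t}{2}$ common neighbours of $u$ and $v$, the determinant expansion gives $\lambda^4-(2s+t+1)\lambda^2-2s\lambda+ts=\lambda^4-m\lambda^2-(m-1-t)\lambda+\frac{t(m-1-t)}{2}$, which specializes at $t=1$ to the polynomial defining $\rho^*(m)$ and at $t=0$ to a polynomial whose largest root satisfies $\lambda^2=\lambda+m-1$, consistent with $\rho(S_{\frac{m+3}{2},2})=\frac{1+\sqrt{4m-3}}{2}$. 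The evaluations $P_t(\alpha)=t\alpha+\frac{t(m-1-t)}{2}$ and $P_t(\beta)=(t-1)\bigl(\beta+\frac{m-2-t}{2}\bigr)$ check out, as do $P_t'(\alpha)=2m\alpha+3m+t-3$ and $P_t''(\alpha)=12\alpha+10m-12$, and your convexity argument correctly upgrades pointwise positivity to positivity on the whole ray, which is indeed the step most often skipped in such comparisons. Two small points worth making explicit: (i) the identification of $\rho(S_{\frac{m+t+3}{2},2}^{-t})$ with the largest root of $P_t$ uses that the quotient matrix of an equitable partition is a nonnegative irreducible matrix whose Perron root equals $\rho(G)$ (this needs $s\geqslant 1$, which holds since $m>t+1$ together with the parity constraint $m+t$ odd forces $m\geqslant t+3$); (ii) in the odd case you use $P_1'(\beta)\geqslant 0$, which should be justified by noting that $\beta$ is the largest real root of the monic quartic $P_1$, so $P_1\geqslant 0$ on $[\beta,\infty)$ and hence its right derivative at $\beta$ is nonnegative, giving $P_t'(\beta)\geqslant t-1>0$. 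With those remarks added, this is a clean, self-contained proof of the imported lemma.
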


To prove the main result Theorem \ref{th1}, we will prove that $G[U]\cong K_{1,a}\cup bK_1$ with $a\geqslant 1$ (see Lemma \ref{star}) and $W=\emptyset$ (see Lemma \ref{W}). Then we have $G\cong S_{\frac{m+t+3}{2},2}^{-t}$. Note that $e(S_{\frac{m+3}{2},2})=e(S_{\frac{m+t+3}{2},2}^{-t})=m$ for $t\geqslant 1$. If $m$ is odd, then $t$ is even, and Lemma \ref{t-} shows that $\rho(S_{\frac{m+3}{2},2})>\rho(S_{\frac{m+t+3}{2},2}^{-t})$ for $t\geqslant 2$. Hence $G\cong S_{\frac{m+3}{2},2}$. If $m$ is even, then $t$ is odd, and Lemma \ref{t-} shows that $\rho(S_{\frac{m+4}{2},2}^{-1})>\rho(S_{\frac{m+t+3}{2},2}^{-t})$ for $t\geqslant 3$. Hence $G\cong S_{\frac{m+4}{2},2}^{-1}$.

\section{Characterization of the extremal graph of the gem}
Recall that $\rho$ is the spectral radius of an extremal graph $G$ and $\bm{x}$ is the positive eigenvector with $x_{u^*}=\max\{x_v:v\in V(G)\}=1$, $U=N_G(u^*)$, $W=V(G)\setminus (N_G(u^*)\cup \{u^*\})$. In this section, we mainly prove that $G[U]\cong K_{1,a}\cup bK_1$ with $a\geqslant 1$ (see Lemma \ref{star}) and $W=\emptyset$ (see Lemma \ref{W}). 

Noting that $\rho\geqslant\rho^*(m)>\frac{1+\sqrt{4m-5}}{2}$, we have
\begin{equation}\label{e1}
\rho^2-\rho> m-\frac{3}{2}.
\end{equation}
Then we have
\begin{equation}\label{23}
\rho^2-\rho> m-\frac{3}{2}=d_G(u^*)+e(U)+e(W)+e(U,W)-\frac{3}{2}.
\end{equation}
Combining (\ref{e3}) and (\ref{23}), we have
\begin{align}
&d_G(u^*)+e(U)+e(W)+e(U,W)-\frac{3}{2}\notag\\
&<\rho^2-\rho\notag\\
&=d_G(u^*)+\sum_{u\in U}(d_U(u)-1)x_u+\sum_{w\in W}d_U(w)x_w\notag\\
&\leqslant\ d_G(u^*)+\sum_{u\in U}(d_U(u)-1)x_u+e(U,W),
\end{align}
which implies
$$\sum_{u\in U}(d_U(u)-1)x_u-e(U)\geqslant e(W)-\frac{3}{2},$$
namely,
\begin{equation}\label{e5}
\eta_1(U)=\sum_{u\in U}(d_U(u)-1)x_u-e(U)\geqslant e(W)-\frac{3}{2}\geqslant -\frac{3}{2},
\end{equation}
and
\begin{equation}\label{e6}
e(W)\leqslant \eta_1(U)+\frac{3}{2}.
\end{equation}

First we will characterize the structure of $G[U]$. Recall that $G$ is gem-free, and then we have $G[U]$ is $P_4$-free. As a consequence, the diameter of the connected component of $G[U]$ is at most $2$. The connected component of $G[U]$ is $K_3$ or $K_{1, a}$ with $a\geqslant 0$. Let $\mathcal{H}$ be the family of connected components in $G[U]$ and $|\mathcal{H}|$ be the number of members in $\mathcal{H}$, and let $\mathcal{H}_1=\{H\in \mathcal{H}: H\cong K_3\}$, $\mathcal{H}_2=\{H\in \mathcal{H}: H\cong K_{1,a}\ \text{with}\  a\geqslant 1\}$, and $\mathcal{H}_3=\mathcal{H}\setminus(\mathcal{H}_1\cup \mathcal{H}_2)$. Hence, for $H\in \mathcal{H}_3$, $H$ is an isolated vertex. 

\begin{lemma}\label{star}
$G[U]\cong K_{1,a}\cup bK_1$ with $a\geqslant 1$.
\end{lemma}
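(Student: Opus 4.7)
My approach will decompose $\eta_1(U)$ component-by-component in $G[U]$ and exploit the inequality $\eta_1(U)\geq -3/2$ from (\ref{e5}). Because $G[U]$ is $P_4$-free, every component is either a triangle, a nontrivial star $K_{1,a}$ with $a\geq 1$, or an isolated vertex. For each triangle $H$ with vertex set $\{u_1,u_2,u_3\}$, its contribution to $\eta_1(U)$ is $x_{u_1}+x_{u_2}+x_{u_3}-3$; for each star $H=K_{1,a}$ with center $c$, its contribution is $(a-1)x_c-a$; and for each isolated vertex $u$, its contribution is $-x_u$. Since $x_v\leq x_{u^*}=1$ for every $v$, each triangle-contribution is $\leq 0$, each star-contribution is $\leq -1$, and each isolated-contribution is $\leq 0$. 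Summing,
\[
\eta_1(U)\;\leq\;-|\mathcal{H}_2|,
\]
and combined with $\eta_1(U)\geq -3/2$ this yields $|\mathcal{H}_2|\leq 1$.

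Next, I rule out $|\mathcal{H}_1|\geq 1$. Suppose $\{u_1,u_2,u_3\}$ is a triangle-component of $G[U]$. I aim to derive a contradiction by producing a gem-free graph $G'$ of the same size with $\rho(G')>\rho(G)$. A natural candidate is $G'=G-u_2u_3+u^*v$ where $v$ is a new vertex: in $G'$, the $K_3$ is replaced by a $K_{1,2}$ centered at $u_1$, while $v$ is an isolated leaf of $u^*$, so $G'[N_{G'}(u^*)]$ remains $P_4$-free and $G'$ is gem-free. Extending the Perron eigenvector $\bm{x}$ of $G$ by $\tilde{x}_v=1/\rho$ and plugging into the Rayleigh quotient reduces the strict inequality $\rho(G')>\rho$ to $x_{u_2}x_{u_3}<1/(2\rho)$, which holds whenever the triangle contains two vertices of small weight. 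For the complementary regime (triangle vertices with eigenvector entries close to $1$), I would combine the Perron identity $(\rho-2)(x_{u_1}+x_{u_2}+x_{u_3})=3+\sum_{i=1}^{3}\sum_{w\in N_W(u_i)}x_w$ with the gem-free constraint that no $w\in W$ is adjacent to all three triangle vertices (else $u_3$ is an apex of the gem $u_3+(w\,u_1\,u^*\,u_2)$), forcing the triangle's contribution to $\eta_1(U)$ to drop strictly below $-3/2$. Either way, invoking Lemma \ref{big} may also serve as an alternative swap device when the relevant $x$-comparisons are favorable.

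Finally, I rule out $|\mathcal{H}_2|=0$. Using $|\mathcal{H}_1|=0$ from the previous step, $G[U]$ is a disjoint union of isolated vertices, giving $\eta_1(U)=-\sum_{u\in U}x_u=-\rho$; since $\rho\geq\rho^*(m)>\frac{1+\sqrt{4m-5}}{2}>3/2$ for $m\geq 92$, this contradicts $\eta_1(U)\geq -3/2$. Hence $|\mathcal{H}_2|=1$, and because every $H\in\mathcal{H}_2$ is a star $K_{1,a}$ with $a\geq 1$ by definition, we conclude $G[U]\cong K_{1,a}\cup bK_1$ with $a\geq 1$. The main obstacle is the elimination of triangles: the naive bound $\sum_{K_3}(x_1+x_2+x_3-3)\leq 0$ is not negative enough on its own, so one must combine the Rayleigh-swap argument with a delicate Perron analysis and, in particular, verify carefully that the candidate graph $G'$ remains gem-free after each edge modification.
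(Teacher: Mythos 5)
Your decomposition of $\eta_1(U)$ by components, the bound $|\mathcal{H}_2|\leqslant 1$, and your exclusion of $\mathcal{H}_2=\emptyset$ (via $\eta_1(U)=-\sum_{u\in U}x_u=-\rho<-\frac{3}{2}$ once triangles are gone, which is in fact slicker than the paper's case split on $W=\emptyset$ versus $W\neq\emptyset$) all match the paper's framework and are correct. The genuine gap is exactly where you flag it: the elimination of triangle components. Your first regime ($x_{u_2}x_{u_3}<\frac{1}{2\rho}$, handled by deleting $u_2u_3$ and hanging a new pendant vertex on $u^*$) is sound, but the complementary regime is only a plan, not a proof. The identity $(\rho-2)(x_{u_1}+x_{u_2}+x_{u_3})=3+\sum_{i=1}^{3}\sum_{w\in N_W(u_i)}x_w$ makes the triangle entries small \emph{only when the triangle has few $W$-neighbours}; if a single triangle vertex has a large $W$-neighbourhood its entry can be close to $1$, and the constraint you invoke (no $w\in W$ adjacent to all three triangle vertices) does not rule this out, so it does not force the triangle's contribution to $\eta_1(U)$ below $-\frac{3}{2}$. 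You never close this case.

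The paper closes it differently and more decisively. First, gem-freeness gives the stronger fact that no $w\in W$ is adjacent to \emph{two} triangle vertices (if $w\sim v_1$ and $w\sim v_2$, then $\{v_1,u^*,v_3,v_2,w\}$ induces a gem). Second, ordering $x_{v_1}\geqslant x_{v_2}\geqslant x_{v_3}$ and applying Lemma \ref{big} to reroute any $W$-edge at $v_2$ or $v_3$ to $v_1$ (which stays gem-free because $e(W)\leqslant 1$), one concludes $N_W(v_2)=N_W(v_3)=\emptyset$. Then symmetry gives $x_{v_2}=x_{v_3}$ and $(\rho-1)x_{v_3}=x_{v_1}+1\leqslant 2$, so $x_{v_2}=x_{v_3}\leqslant\frac{2}{\rho-1}\leqslant\frac{2}{9}$, and the triangle contributes at most $1+\frac{4}{9}-3=-\frac{14}{9}<-\frac{3}{2}$ to $\eta_1(U)$, contradicting (\ref{e5}). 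To repair your argument you would need an analogous localisation of the $W$-neighbours of the triangle (or a fully quantitative version of your second regime); as written, the triangle case is not proved.
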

\begin{proof}

For each connected component $H$ of $G[U]$, we claim that
\begin{equation}\label{h1}
\eta_1(V(H))\leqslant \left\{
                  \begin{array}{lll}
                    0, & \hbox{if $H\in \mathcal{H}_1$,} \\
                    -1, & \hbox{if $H\in \mathcal{H}_2$,}\\
                    0,& \hbox{if $H\in \mathcal{H}_3$.}
                  \end{array}
                \right.
\end{equation}
If $H\in \mathcal{H}_1$, then $H\cong K_3$. Hence
\begin{align*}
\eta_1(V(H))=&\sum_{u\in V(H)}(d_H(u)-1)x_u-e(H)\\
\leqslant&\sum_{u\in V(H)}(d_H(u)-1)-e(H)\\
=&e(H)-V(H)=0.
\end{align*}
If $H\in \mathcal{H}_2$, then $H\cong K_{1,a}$ with $a\geqslant 1$. Hence
\begin{align*}
\eta_1(V(H))=&\sum_{u\in V(H)}(d_H(u)-1)x_u-e(H)\\
\leqslant&\sum_{u\in V(H)}(d_H(u)-1)-e(H)\\
=&e(H)-V(H)=-1.
\end{align*}
If $H\in \mathcal{H}_3$, then $H$ is an isolated vertex. Hence
$$\eta_1(V(H))=\sum_{u\in V(H)}(d_H(u)-1)x_u-e(H)=-\sum_{u\in V(H)}x_u< 0.$$

Combining (\ref{e6}) and  (\ref{h1}), we have $e(W)\leqslant \eta_1(U)+\frac{3}{2}\leqslant \frac{3}{2}$. Hence, $e(W)\leqslant 1$.

\begin{claim}\label{H3}
$\mathcal{H}_1=\emptyset$.
\end{claim}
\begin{proof}[\rm{\textbf{Proof of Claim \ref{H3}}}]
Suppose to the contrary that $\mathcal{H}_1\neq\emptyset$. Then there is a component $H\cong K_3$ in $G[U]$, say $V(H)=\{v_1,v_2,v_3\}$. Suppose that $x_{v_1}\geqslant x_{v_2}\geqslant x_{v_3}$. 

We will claim that $N_W(v_2)=N_W(v_3)=\emptyset$. If $\cup_{i=1}^3N_W(v_i)=\emptyset$, then it is obvious that $N_W(v_2)=N_W(v_3)=\emptyset$. Suppose that $\cup_{i=1}^3N_W(v_i)\neq\emptyset$. For any $w\in \cup_{i=1}^3N_W(v_i)$, since $G$ is gem-free, we have $|N_G(w)\cap \{v_1,v_2,v_3\}|\leqslant 1$. Otherwise suppose $w\sim v_1$ and $w\sim v_2$. Hence $G[\{v_1,u^*,v_3,v_2,w\}]$ is the gem, which is a contradiction. We will prove that $N_G(w)\cap \{v_1,v_2,v_3\}=\{v_1\}$. Without loss of generality, suppose to the contrary that $N_G(w)\cap \{v_1,v_2,v_3\}=\{v_3\}$. Let $G'=G-wv_3+wv_1$. Recall that $e(W)\leqslant 1$. Then $G'$ is still a gem-free graph with size $m$. Since $x_{v_1}\geqslant x_{v_3}$, by Lemma \ref{big}, we have $\rho(G')>\rho(G)$, which is a contradiction. Thus we can get $N_W(v_2)=N_W(v_3)=\emptyset$. Hence, by symmetry, we have $x_{v_2}=x_{v_3}$ and $\rho x_{v_3}=x_{v_1}+x_{v_2}+1$. By the assumption $m\geqslant 92$, we will obtain $\rho>\frac{1+\sqrt{4m-5}}{2}>10$, and then $x_{v_3}\leqslant \frac{2}{\rho-1}\leqslant \frac{2}{9}$. By (\ref{e6}) and (\ref{h1}), we have
\begin{align*}
e(W)\leqslant&\eta_1(U)+\frac{3}{2}\\
=&\sum_{i=1}^3(d_{H}(v_i)-1)x_{v_i}-e(H)+\sum_{H_1\in \mathcal{H}\setminus H}\eta_1(V(H_1))+\frac{3}{2}\\
\leqslant&1+2\cdot\frac{2}{9} -3+\sum_{H_1\in \mathcal{H}\setminus H}\eta_1(V(H_1))+\frac{3}{2}\\
\leqslant&-\frac{1}{18},
\end{align*}
which is a contradiction.
\end{proof}

\begin{claim}\label{one}
$|\mathcal{H}_2|=1$.
\end{claim}
\begin{proof}[\rm{\textbf{Proof of Claim \ref{one}}}]
By (\ref{e5}), (\ref{h1}) and Claim \ref{H3},
\begin{align*}
-\frac{3}{2}\leqslant\eta_1(U)=\sum_{H\in \mathcal{H}_2}\eta_1(V(H))+\sum_{H\in \mathcal{H}_3}\eta_1(V(H))\leqslant-|\mathcal{H}_2|.
\end{align*}
Hence, $|\mathcal{H}_2|\leqslant \frac{3}{2}$ and then $0\leqslant |\mathcal{H}_2|\leqslant 1$.

Now it suffices to show that $\mathcal{H}_2\neq\emptyset$. If $\mathcal{H}_2=\emptyset$, then $G[U\cup\{u^*\}]$ is a star. Assume that $N_G(u^*)=\{v_1,\cdots,v_{d_G(u^*)}\}$. If $W=\emptyset$ and then $G$ is a star. Furthermore, $\rho(G)=\sqrt{m}< \frac{1+\sqrt{4m-5}}{2}$, which is a contradiction. If $W\neq\emptyset$, since $G$ is connected, then there is a vertex $w\in W$ and $w\sim v_i$ for some $i$. Hence we construct a new graph $G'=G-wv_i+u^*v_i$. Since $e(W)\leqslant 1$, we have $G'$ is still gem-free. Since $x_{w}\leqslant x_{u^*}=1$, by Lemma \ref{big}, we have $\rho(G')>\rho(G)$, which is a contradiction. 
\end{proof}

By Claims \ref{H3} and \ref{one}, we have $G[U]\cong K_{1,a}\cup bK_1$ with $a\geqslant 1$. 
\end{proof}

By (\ref{e6}) and  (\ref{h1}), we have 
$$e(W)\leqslant \eta_1(U)+\frac{3}{2}=\sum_{H\in\mathcal{H}_2}\eta_1(H)+\sum_{H\in\mathcal{H}_3}\eta_1(H)+\frac{3}{2}\leqslant \frac{1}{2},$$ which implies that $e(W)=0$. We will further prove that $W=\emptyset$.

\begin{lemma}\label{W}
$W=\emptyset$.
\end{lemma}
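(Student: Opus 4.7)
The plan is to assume $W\neq\emptyset$ and derive a contradiction from two incompatible estimates on $d_U(w)$ for an arbitrary $w\in W$. Write $v_0$ for the centre of the star component of $G[U]$, $v_1,\ldots,v_a$ for its leaves, and $u_1,\ldots,u_b$ for the isolated vertices of $G[U]$. The starting point is to refine the inequality chain ending with (\ref{e6}): since $e(W)=0$ has just been observed, the identities $m=2a+b+1+e(U,W)$ and $\eta_1(U)=(a-1)x_{v_0}-a-\sum_k x_{u_k}$, combined with $\rho^2-\rho>m-\tfrac{3}{2}$ and (\ref{e3}), produce
\begin{equation*}
\sum_{k=1}^{b} x_{u_k}+\sum_{w\in W} d_U(w)(1-x_w) < \tfrac{1}{2}-(a-1)(1-x_{v_0}) \leqslant \tfrac{1}{2}.
\end{equation*}
In particular $\sum_k x_{u_k}<\tfrac{1}{2}$ and $d_U(w)(1-x_w)<\tfrac{1}{2}$ for every $w\in W$; the latter already gives $x_w>\tfrac{1}{2}$.

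I would then show $N_W(v_0)=\emptyset$. Eliminating $\sum_i x_{v_i}$ between the eigenequation $\rho x_{v_0}=1+\sum_i x_{v_i}+\sum_{w'\in N_W(v_0)}x_{w'}$ and the identity $\sum_i x_{v_i}=\rho-x_{v_0}-\sum_k x_{u_k}$ (which comes from $\rho x_{u^*}=\sum_{v\in U}x_v$) gives $(\rho+1)x_{v_0}=\rho+1-\sum_k x_{u_k}+\sum_{w'\in N_W(v_0)}x_{w'}$, so $x_{v_0}\leqslant 1$ forces $\sum_{w'\in N_W(v_0)}x_{w'}\leqslant\sum_k x_{u_k}<\tfrac{1}{2}$. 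Since each $x_{w'}>\tfrac{1}{2}$, the set $N_W(v_0)$ must be empty, and the same identity yields the lower bound $x_{v_0}>1-\tfrac{1}{2(\rho+1)}$ to be used below.

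Having $v_0\in U\setminus N_U(w)$ for every $w\in W$, the identity $\rho(1-x_w)=\sum_{v\in U\setminus N_U(w)}x_v$ implies $1-x_w\geqslant x_{v_0}/\rho$, which together with $d_U(w)(1-x_w)<\tfrac{1}{2}$ gives the upper bound $d_U(w)<\rho/(2x_{v_0})$. In the opposite direction, the trivial $x_w\leqslant d_U(w)/\rho$ (from $x_v\leqslant x_{u^*}=1$) fed into the same inequality produces $d_U(w)^2-\rho\,d_U(w)+\rho/2>0$, whose smaller root lies below $1$ whenever $\rho>2$; since $d_U(w)\geqslant 1$ is an integer, this forces $d_U(w)>(\rho+\sqrt{\rho^2-2\rho})/2$.

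The contradiction is then the numerical statement $(\rho+\sqrt{\rho^2-2\rho})/2<d_U(w)<\rho/(2x_{v_0})$, which, using $x_{v_0}>21/22$, fails for every $\rho>10$; the hypothesis $m\geqslant 92$ together with $\rho\geqslant\rho^*(m)>\tfrac{1+\sqrt{4m-5}}{2}$ guarantees $\rho>10$. The step I expect to be the main obstacle is precisely this final calibration — verifying that the two opposing bounds on $d_U(w)$ genuinely conflict throughout the admissible range of $\rho$; everything else is bookkeeping from (\ref{e3}) or short manipulations of the eigenequations.
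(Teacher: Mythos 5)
Your proposal is correct, and it takes a genuinely different route from the paper. The paper's proof first uses local edge-switching (Lemma \ref{big}) to show that any $w\in W$ is non-adjacent to the centre $v$ of the star and to every isolated vertex $u_j$ of $G[U]$, so that $N_G(w)\subseteq\{v_1,\dots,v_a\}$ with $d_G(w)\geqslant 2$; it then runs a two-stage construction that deletes all of $W$, replaces each $w_i$ by $\lceil d_i/2\rceil$ new vertices joined to $u^*$ and $v$, and compares Rayleigh quotients via $\bm{y}^T(A(G_c')-A(G_c))\bm{z}$ to exhibit an edge-preserving, gem-free graph of strictly larger spectral radius. Your argument avoids both the switching lemmas and the construction entirely: you sharpen the chain leading to (\ref{e6}) into $\sum_k x_{u_k}+\sum_{w\in W}d_U(w)(1-x_w)<\tfrac12$, extract $x_w>\tfrac12$ and $N_W(v_0)=\emptyset$ (hence $x_{v_0}>1-\tfrac{1}{2(\rho+1)}$), and then trap $d_U(w)$ between the incompatible bounds $\tfrac{\rho+\sqrt{\rho^2-2\rho}}{2}<d_U(w)<\tfrac{\rho}{2x_{v_0}}$. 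I checked the key steps: the refined inequality follows from (\ref{e3}), $m-\tfrac32<\rho^2-\rho$, $e(W)=0$ and $\eta_1(U)=(a-1)x_{v_0}-a-\sum_k x_{u_k}$; the lower bound on $d_U(w)$ follows because the smaller root of $t^2-\rho t+\rho/2$ is below $1$ for $\rho>2$ while $d_U(w)\geqslant 1$ (connectivity plus $e(W)=0$); and for $\rho>10$ one has $\tfrac{\rho}{2x_{v_0}}<\tfrac{11\rho}{21}\approx 0.524\rho$ against $\tfrac{\rho+\sqrt{\rho^2-2\rho}}{2}>0.947\rho$, so the window is indeed empty. Your approach is shorter and purely analytic, at the cost of a final numerical calibration; the paper's approach is longer but produces an explicit gem-free comparison graph and needs no such calibration.
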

\begin{proof}
Suppose to the contrary that there is a vertex $w\in W$. Let $V(bK_1)=\{u_1,\cdots,u_b\}$ and $V(K_{1,a})=\{v,v_1,\cdots,v_a\}$ where $v$ is the central vertex of the star $K_{1,a}$. We claim that $w\nsim v$. Suppose $w\sim v$. We claim that the vertex $w$ is not adjacent to the vertex in $\{v_1,\cdots,v_a\}$. Otherwise, assume that $w\sim v_1$. If $a=1$, then  we construct a new graph $G'=G-wv_1+wu^*$. By the fact $e(W)=0$, we have $G'$ is still gem-free. Since $x_{v_1}\leqslant x_{u^*}=1$, by Lemma \ref{big}, we have $\rho(G')>\rho(G)$, which is a contradiction. If $a\geqslant 2$, then $G[\{v, u^*,v_1,v_2,w\}]$ is the gem, which is a contradiction. Hence $w$ is not adjacent to the vertex in $\{v_1,\cdots,v_a\}$. Then we construct a new graph $G^*=G-wv+wu^*$. Combining the fact $e(W)=0$, we have $G^*$ is still gem-free. Since $x_v\leqslant x_{u^*}=1$, by Lemma \ref{big}, we have $\rho(G^*)>\rho(G)$, which is a contradiction. Hence $w\nsim v$. 

By (\ref{e5}), we have 
$$-\frac{3}{2}\leqslant\eta_1(U)=(a-1)x_v-a-\sum_{j=1}^bx_{u_j}\leqslant -1-\sum_{j=1}^bx_{u_j}.$$
Hence, 
\begin{equation}\label{12}
\sum_{j=1}^bx_{u_j}\leqslant\frac{1}{2}.
\end{equation}
By the eigenquation, we can deduce that $\rho x_{v}=\sum_{i=1}^ax_{v_i}+1$ and $\rho x_{w}\leqslant\sum_{i=1}^ax_{v_i}+\sum_{j=1}^bx_{u_j}\leqslant\sum_{i=1}^ax_{v_i}+\frac{1}{2}$. Hence $x_w<x_v$. If $w\sim u_j$, then let $G'=G-u_jw+u_jv$. Observe that $G'$ is still a gem-free graph with size $m$. Combining the fact $x_w<x_v$ and Lemma \ref{big}, we have $\rho(G')>\rho(G)$, which is a contradiction. Hence $w\nsim u_j$ for $1\leqslant j\leqslant b$. 

Let $W=\{w_1,\cdots,w_c\}$. From above, we know that $N_G(w_i)\subseteq\{v_1,\cdots,v_a\}$, set $d_i=d_G(w_i)$. If $d_i=1$, say $w_i\sim v_1$, then we can construct a new graph $G'=G-w_iv_1+w_iu^*$, which derives a same contradiction. Hence $d_i\geqslant 2$ for $1\leqslant i\leqslant c$. Next, we will modify the structure of $G$ to obtain a contradiction.

\noindent\rule[0pt]{17.5cm}{0.09em}
\textbf{Construction 1.} 

\textbf{Input} Graph $G_0=G$ and $i=1$.

\textbf{Output} Graph $G_c$.

\textbf{Step 1} \textbf{While} $d_i$ even, \textbf{do}

$G_i=G_{i-1}\cup \frac{d_i}{2}K_1$, and $V_i=\{k_1^i,\cdots,k_{\frac{d_i}{2}}^i\}$. 

\textbf{Otherwise} $d_i$ odd, \textbf{do} 

$G_i=G_{i-1}\cup (\frac{d_i-1}{2}+1)K_1$, and  $V_i=\{k_0^i,k_1^i,\cdots,k_{\frac{d_i-1}{2}}^i\}$. 

\textbf{Step 2} \textbf{While} $i<c$, \textbf{do}

$i=i+1$ and back to \textbf{Step 1}.

\textbf{Otherwise,} stop the construction.

\noindent\rule[0pt]{17.5cm}{0.09em}

\textbf{Construction 2.} 

\textbf{Input} Graph $G_0'=G_c$ and $i=1$.

\textbf{Output} Graph $G_c'$.

\textbf{Step 1} \textbf{While} $d_i$ even, \textbf{do}
$$G_i'=G_{i-1}'-\sum_{v_j\in N_G(w_i)}w_iv_j+\sum_{j=1}^{\frac{d_i}{2}}(u^*k_j^i+vk_j^i).$$

\textbf{Otherwise} $d_i$ odd, \textbf{do} 
$$G_i'=G_{i-1}'-\sum_{v_j\in N_G(w_i)}w_iv_j+\sum_{j=1}^{\frac{d_i-1}{2}}(u^*k_j^i+vk_j^i)+u^*k_0^i.$$

\textbf{Step 2} \textbf{While} $i<c$, \textbf{do}

$i=i+1$ and back to \textbf{Step 1}.

\textbf{Otherwise,} stop the construction.

\noindent\rule[0pt]{17.5cm}{0.09em}

By Construction 1, we have $\rho(G)=\rho(G_c)$. Let $G_c^*=G_c'-\{w_1,\cdots,w_c\}$. Then $G_c^*\cong S_{\frac{m+t+3}{2},2}^{-t}$. Hence $G_c^*$ is a connected gem-free graph with size $m$ and $\rho(G_c^*)=\rho(G_c')$.  To obtain a contradiction, we will show $\rho(G_c^*)>\rho(G)$.

Let $\bm{y}=\binom{\bm{x}}{0}$ and $\bm{z}$ be a non-negative eigenvector of $G_c$ and $G_c'$ corresponding to $\rho(G_c)$ and $\rho(G_c')$. Since $k_j^i$ is an isolated vertex in $G_c$ and $w_i$ is an isolated vertex in $G_c'$ for any $1\leqslant i\leqslant c$ and $1\leqslant j\leqslant d_i$, it follows that $y_{k_j^i}=z_{w_i}=0$ for $0\leqslant j\leqslant d_i$ and $1\leqslant i\leqslant c$. Observe that $N_{G_{c'}}(k_j^i)=N_{G_{c'}}(v_{\ell})$ for $j\geqslant 1$ and $1\leqslant\ell\leqslant a$. Hence $z_{k_j^i}=z_{v_{\ell}}$ for $j\geqslant 1$ and $1\leqslant\ell\leqslant a$. Combining the fact $x_{w_i}<x_v\leqslant x_{u^*}=1$, we have $x_{u^*}z_{k_j^i}-x_{w_i}z_{v_{\ell}}=(x_{u^*}-x_{w_i})z_{v_{\ell}}>0$, $x_vz_{k_j^i}-x_{w_i}z_{v_{\ell}}=(x_{v}-x_{w_i})z_{v_{\ell}}>0$.

If $d_i$ is even, let
\begin{align*}\label{e11}
f_{w_i}=&\sum_{j=1}^{\frac{d_i}{2}}(z_{u^*}y_{k_j^i}+y_{u^*}z_{k_j^i}+z_vy_{k_j^i}+y_vz_{k_j^i})-\sum_{j=1}^{d_i}(z_{w_i}y_{v_j}+y_{w_i}z_{v_j})\notag\\
=&\sum_{j=1}^{\frac{d_i}{2}}(y_{u^*}z_{k_j^i}+y_vz_{k_j^i})-\sum_{j=1}^{d_i}y_{w_1}z_{v_j}\notag\\
=&\sum_{j=1}^{\frac{d_i}{2}}(y_{u^*}z_{k_j^i}-y_{w_i}z_{v_j})+\sum_{j=1}^{\frac{d_i}{2}}(y_vz_{k_j^i}-y_{w_i}z_{v_{j+\frac{d_i}{2}}})\notag\\
=&\sum_{j=1}^{\frac{d_i}{2}}(x_{u^*}z_{k_j^i}-x_{w_i}z_{v_j})+\sum_{j=1}^{\frac{d_i}{2}}(x_vz_{k_j^i}-x_{w_i}z_{v_{j+\frac{d_i}{2}}})>0.
\end{align*}
If $d_i$ is odd, let
\begin{align*}
f_{w_i}=& \sum_{j=1}^{\frac{d_i-1}{2}}(z_{u^*}y_{k_j^i}+y_{u^*}z_{k_j^i}+z_vy_{k_j^i}+y_vz_{k_j^i})+(z_{u^*}y_{k_0^1}+y_{u^*}z_{k_0^i})-\sum_{j=1}^{d_i}(z_{w_i}y_{v_j}+y_{w_i}z_{v_j}) \notag\\
=&\sum_{j=1}^{\frac{d_i-1}{2}}(y_{u^*}z_{k_j^i}+y_vz_{k_j^i})+y_{u^*}z_{k_0^i}-\sum_{j=1}^{d_i}y_{w_i}z_{v_j}\notag\\
=&\sum_{j=1}^{\frac{d_i-1}{2}}(y_{u^*}z_{k_j^i}-y_{w_i}z_{v_j})+\sum_{j=1}^{\frac{d_i-1}{2}}(y_vz_{k_j^i}-y_{w_i}z_{v_{j+\frac{d_i}{2}}})+(y_{u^*}z_{k_0^i}-y_{w_i}z_{v_{d_i}})\notag\\
=&\sum_{j=1}^{\frac{d_i-1}{2}}(x_{u^*}z_{k_j^i}-x_{w_i}z_{v_j})+\sum_{j=1}^{\frac{d_i-1}{2}}(x_vz_{k_j^i}-x_{w_i}z_{v_{j+\frac{d_i}{2}}})+(x_{u^*}z_{k_0^i}-x_{w_i}z_{v_{d_i}})>0.
\end{align*}
Then we have
\begin{align*}
(\rho(G_c')-\rho(G_c))\bm{y}^T\bm{z}&=\bm{y}^T(A(G_c')-A(G_c))\bm{z}=\sum_{i=1}^cf_{w_i}>0.
\end{align*}
Hence $\rho(G_c')>\rho(G_c)$. Furthermore, $\rho(G_c^*)=\rho(G_c')>\rho(G_c)=\rho(G)$, which is a contradiction to the choice of $G$.
\end{proof}

\noindent\textbf{Declaration of competing interest:} This paper does not have any conflicts to disclose.

\end{document}